\newtheorem{theorem}{Theorem}
\newtheorem{claim}{Claim}
\begin{document}
\onehalfspace

\title{Majority Edge-Colorings of Graphs}
\author{Felix Bock$^1$\and 
Rafa\l~Kalinowski$^2$\and
Johannes Pardey$^1$\and 
Monika Pil\'{s}niak$^2$\and 
Dieter Rautenbach$^1$\and 
Mariusz Wo\'{z}niak$^2$}
\date{}

\maketitle
\vspace{-10mm}
\begin{center}
{\small 
$^1$ Institute of Optimization and Operations Research, Ulm University, Ulm, Germany\\
\texttt{$\{$felix.bock,johannes.pardey,dieter.rautenbach$\}$@uni-ulm.de}\\[3mm]
$^2$ AGH University, Department of Discrete Mathematics, 30-059 Krakow, Poland\\
\texttt{$\{$kalinows,pilsniak,mwozniak$\}$@agh.edu.pl}}
\end{center}

\begin{abstract}
We propose the notion of a majority $k$-edge-coloring of a graph $G$,
which is an edge-coloring of $G$ with $k$ colors such that, for every vertex $u$ of $G$,
at most half the edges of $G$ incident with $u$ have the same color.
We show the best possible results 
that every graph of minimum degree at least $2$ has a majority $4$-edge-coloring,
and that every graph of minimum degree at least $4$ has a majority $3$-edge-coloring.
Furthermore, we discuss a natural variation of majority edge-colorings 
and some related open problems.\\[3mm]
{\bf Keywords:} Edge coloring; chromatic index; unfriendly partition
\end{abstract}

\bigskip

\bigskip

\section{Introduction}

Motivated by similar notions considered for vertex-colorings,
we propose and study {\it majority edge-colorings} of graphs:
For a (finite, simple, and undirected) graph $G$, 
an edge-coloring $c:E(G)\to [k]$ 
is a {\it majority $k$-edge-coloring} 
if, for every vertex $u$ of $G$ 
and every color $\alpha$ in $[k]$, 
at most half the edges incident with $u$ 
have the color $\alpha$.

Before we present our results, 
we discuss some related research.
Lov\'{a}sz \cite{lo} showed that every graph $G$ has a $2$-vertex-coloring
such that, for every vertex $u$ of $G$, 
at most half the neighbors of $u$ have the same color as $u$.
For infinite graphs, this leads to the {\it Unfriendly Partition Conjecture} \cite{ah}.
Kreutzer, Oum, Seymour, van der Zypen, and Wood \cite{kr} 
showed that every digraph $D$ has a $4$-vertex-coloring such that, for every vertex $u$ of $D$, 
at most half the out-neighbors of $u$ have the same color as $u$,
and they conjecture that $3$ colors suffice.
Anholcer, Bosek, and Grytczuk \cite{anbogr} studied a choosability version for digraphs.
It follows from a result of Wood \cite{wo}
that every digraph $D$ has a $4$-arc-coloring
such that, for every vertex $u$ of $D$,
at most half the arcs leaving $u$ have the same color.
Further related research 
concerns {\it defective} or {\it frugal} edge-colorings \cite{abauhu,amepva,hislst},
where maximum degree conditions are imposed on the subgraphs 
formed by edges having the same color.

Our first result is that $2$ colors almost suffice for a majority edge-coloring.

\begin{theorem}\label{theorem-1}
Let $G$ be a connected graph.
\begin{enumerate}[(i)]
\item If $G$ has an even number of edges or $G$ contains vertices of odd degree, 
then $G$ has a $2$-edge-coloring 
such that, for every vertex $u$ of $G$, 
at most $\left\lceil\frac{d_G(u)}{2}\right\rceil$ of the edges 
incident with $u$ have the same color.
\item If $G$ has an odd number of edges, all vertices of $G$ have even degree,
and $u_G$ is any vertex of $G$, 
then $G$ has a $2$-edge-coloring 
such that, for every vertex $u$ of $G$ distinct from $u_G$, 
exactly $\frac{d_G(u)}{2}$ 
of the edges incident with $u$ have the same color,
and exactly $\frac{d_G(u_G)}{2}+1$ 
of the edges incident with $u_G$ have the same color.
\end{enumerate}
\end{theorem}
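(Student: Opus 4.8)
The plan is to build both colorings from Eulerian circuits, coloring the edges alternately with the two colors as one traverses the circuit. The single observation driving everything is the following. Let $H$ be a connected graph in which every vertex has even degree, fix a base vertex $v_0$, and let $v_0,e_1,v_1,e_2,\dots,e_{m'},v_{m'}=v_0$ be an Eulerian circuit of $H$, which exists by the degree condition and connectivity. Color $e_i$ with color $A$ if $i$ is odd and with color $B$ if $i$ is even. The visits of the circuit partition the edges at each vertex $u$ into $d_H(u)/2$ pairs; for $u\neq v_0$ every such pair consists of two edges consecutive in the sequence $e_1,\dots,e_{m'}$ and hence of different colors, so $u$ carries exactly $d_H(u)/2$ edges of each color. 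At $v_0$ all pairs are consecutive except the single wrap-around pair $\{e_1,e_{m'}\}$. Since $e_1$ has color $A$, this pair is balanced when $m'$ is even (then $e_{m'}$ has color $B$) and carries two $A$'s when $m'$ is odd; in the odd case color $A$ appears $d_H(v_0)/2+1$ times and color $B$ appears $d_H(v_0)/2-1$ times at $v_0$, while every other vertex stays perfectly balanced.

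For part (ii) I would apply this directly with $H=G$ and $v_0=u_G$. Here every degree of $G$ is even, so the circuit exists, and $m'=|E(G)|$ is odd, so the construction yields exactly $\tfrac{d_G(u)}{2}$ edges of each color at every $u\neq u_G$ and the count $\tfrac{d_G(u_G)}{2}+1$ at $u_G$, which is precisely the assertion.

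For part (i) I would split according to whether $G$ has vertices of odd degree. If all degrees are even, then by hypothesis $|E(G)|$ is even, so the construction applied to $H=G$ with an arbitrary base vertex balances every vertex perfectly, giving $d_G(u)/2\le\lceil d_G(u)/2\rceil$ everywhere. If instead $G$ has odd-degree vertices, let $O$ be the set of them, which has even size, introduce a new vertex $w$, and form $H=G+w$ by joining $w$ to every vertex of $O$. Then $H$ is connected (as $G$ is connected and $O\neq\emptyset$), $\deg_H(w)=|O|$ is even, and each former odd-degree vertex now has even degree, so $H$ is connected and even. Applying the construction to $H$ with base vertex $v_0=w$ forces the possible wrap-around defect onto $w$, so every $u\neq w$ is perfectly balanced in $H$. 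Deleting $w$ and retaining the colors of the edges of $G$: a vertex $u\notin O$ keeps all $d_G(u)$ of its balanced edges, and a vertex $u\in O$ loses exactly the edge to $w$, turning its split $\tfrac{d_G(u)+1}{2},\tfrac{d_G(u)+1}{2}$ into $\tfrac{d_G(u)-1}{2},\tfrac{d_G(u)+1}{2}$; in either case each color appears at most $\lceil d_G(u)/2\rceil$ times.

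The main technical point, and the only place where the hypotheses are actually used, is arranging for the wrap-around defect to be harmless: in the all-even case this is guaranteed by the parity assumption that $|E(G)|$ is even, and in the odd-vertex case by basing the circuit at the discarded auxiliary vertex $w$. The remaining checks are routine, namely that $G+w$ is connected and even so that the Eulerian circuit exists, both of which follow immediately from $G$ being connected and $O$ being a nonempty set of even size.
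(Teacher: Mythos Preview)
Your proof is correct and follows essentially the same approach as the paper: color the edges of an Eulerian circuit alternately and arrange that the single possible wrap-around defect lands on something that gets discarded. The only cosmetic difference is how the odd-degree case of (i) is made Eulerian---the paper adds a perfect matching on the odd-degree vertices (working in a multigraph) and places one matching edge at the end of the tour, whereas you add a new vertex $w$ joined to all odd-degree vertices and base the tour at $w$; both tricks serve the identical purpose of absorbing the wrap-around pair into an auxiliary edge.
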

Using Vizing's bound \cite{vi} on the chromatic index leads to our second result.
\begin{theorem}\label{theorem0}
Every graph of minimum degree at least $2$ has a majority $4$-edge-coloring.
\end{theorem}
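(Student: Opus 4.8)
The plan is to reformulate the statement as an edge-partition problem and then exploit that, under the minimum degree hypothesis, properness already implies the majority condition. A majority $4$-edge-coloring is nothing but a partition of $E(G)$ into four classes $F_1,\dots,F_4$ with $d_{F_i}(u)\le \lfloor d_G(u)/2\rfloor$ for every vertex $u$ and every $i$. The first fact I would record is that, whenever $G$ has minimum degree at least $2$, \emph{any} proper edge-coloring is automatically a majority coloring: each color meets each vertex at most once, and $1\le \lfloor d_G(u)/2\rfloor$ as soon as $d_G(u)\ge 2$. Combined with Vizing's bound $\chi'(G)\le \Delta(G)+1$ \cite{vi}, this already settles every graph of maximum degree at most $3$ (and, more usefully, every subgraph of maximum degree at most $3$) using at most four colors.

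For large maximum degree I would create room by halving degrees. Using Eulerian circuits, after adding a dummy vertex joined to all odd-degree vertices so that every degree becomes even, I would split $E(G)$ into two spanning subgraphs $G_1$ and $G_2$ with $\{d_{G_1}(u),d_{G_2}(u)\}=\{\lceil d_G(u)/2\rceil,\lfloor d_G(u)/2\rfloor\}$ at every vertex. I would then apply Theorem~\ref{theorem-1} component-wise to $G_1$ using colors $\{1,2\}$ and to $G_2$ using colors $\{3,4\}$, and take the union. Since the two color pairs are disjoint, the count of any color at $u$ is bounded by the Theorem~\ref{theorem-1} guarantee on the corresponding half, namely $\lceil d_{G_i}(u)/2\rceil$ in case (i). A short calculation shows $\lceil \lceil d/2\rceil/2\rceil\le \lfloor d/2\rfloor$ for all $d\ge 2$, so every color stays within the majority threshold $\lfloor d_G(u)/2\rfloor$, and no vertex is an obstruction because $\lfloor d_G(u)/2\rfloor\ge 1$.

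The hard part is case (ii) of Theorem~\ref{theorem-1}. There a single special vertex $u$ of a component receives $d_{G_i}(u)/2+1$ edges of one color, and checking the arithmetic shows this exceeds $\lfloor d_G(u)/2\rfloor$ in exactly one situation: when $d_G(u)=3$ and $d_{G_i}(u)=2$. For a component all of whose vertices have $G$-degree $3$ this defect is forced, and such a component is precisely an odd cycle of degree-$3$ vertices sitting inside $G_1$ (or $G_2$), each carrying one further edge into the other half. I would remove the defect on these exceptional components by discarding the two-coloring there and instead properly edge-coloring each such cycle together with its incident edges; this subgraph has maximum degree $3$, so Vizing's bound furnishes a proper, hence majority, coloring with four colors. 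Making these local proper colorings agree with the two global colorings at the far endpoints of the incident edges, without creating a new color excess there, is the delicate point on which the whole argument turns, and it is exactly here that Vizing's bound does the essential work.
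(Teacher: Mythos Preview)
Your final step is a genuine gap, not just a delicate detail. For an exceptional component $K\subseteq G_1$---an odd cycle whose vertices $v_j$ all have $d_G(v_j)=3$, each with one pendant edge $e_j\in E(G_2)$---you propose to properly $4$-color $K\cup\{e_j\}$ via Vizing and then ``agree'' with the global colorings at the far ends $w_j$. But recoloring $e_j$ to any color $c'$ may push the count of $c'$ at $w_j$ above $\lfloor d_G(w_j)/2\rfloor$; nothing in a proper coloring of $K\cup\{e_j\}$ controls this, and you give no mechanism for it beyond the assertion that ``Vizing's bound does the essential work.'' (There is also the side issue that several pendants can share a far endpoint, so $K\cup\{e_j\}$ need not have maximum degree $3$ in the first place.) What actually repairs the argument is to leave every $c(e_j)\in\{3,4\}$ untouched and recolor only the cycle edges: edge $v_jv_{j+1}$ then carries the list $\{1,2,3,4\}\setminus\{c(e_j),c(e_{j+1})\}\supseteq\{1,2\}$, and since the odd-length cyclic sequence $c(e_1),\dots,c(e_{2k+1})\in\{3,4\}$ cannot alternate, some list has size $3$. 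An odd cycle is list-edge-colorable from lists of size at least $2$ once one list has size $3$, so every $v_j$ ends up with three distinctly colored edges and no $w_j$ is disturbed.

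By contrast, the paper's proof is a one-step reduction with no case analysis. It splits every vertex of degree $d>3$ into $\lceil d/3\rceil$ vertices of degree $2$ or $3$, obtaining a graph $G^*$ with $\Delta(G^*)\le 3$ on the same edge set, and applies Vizing once to $G^*$. The resulting proper $4$-edge-coloring of $G^*$ is automatically a majority $4$-edge-coloring of $G$, and in fact yields the stronger bound of at most $\lceil d/3\rceil$ edges per color at each vertex of degree $d\ge 4$. Your Euler-halving route through Theorem~\ref{theorem-1} is considerably more intricate and, even after the gap is closed, only recovers the weaker $\lfloor d/2\rfloor$ bound.
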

Clearly, a graph containing a vertex of degree $1$ 
does not have a majority edge-coloring,
which motivates the minimum degree condition in Theorem \ref{theorem0}.
Furthermore, 
since class 2 graphs of minimum degree at least $2$ and maximum degree $3$
have no majority $3$-edge-coloring,
the number of colors in Theorem \ref{theorem0} is best possible
under this minimum degree condition.
In fact, if a graph $G$ of minimum degree at least $2$ 
has an induced subgraph $H$ 
such that $H$ is a class 2 graph of maximum degree $3$,
and all vertices of $H$ have degree $2$ or $3$ in $G$,
then $G$ has no majority $3$-edge-coloring.
We conjecture that these are all graphs for which $4$ colors are needed.

Our third result supports this conjecture.

\begin{theorem}\label{theorem1}
Every graph of minimum degree at least $4$ has a majority $3$-edge-coloring.
\end{theorem}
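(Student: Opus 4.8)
The plan is to single out one colour class and reduce everything else to Theorem~\ref{theorem-1}. I may assume $G$ is connected, as the components are handled independently and each inherits minimum degree at least $4$. The guiding observation is that $\delta(G)\ge 4$ makes every budget $\lfloor d_G(u)/2\rfloor$ at least $2$, so it suffices to find a subgraph $F\subseteq G$ with $d_F(u)\le 2$ for every vertex $u$ and $d_F(u)\ge 1$ for every vertex $u$ of odd degree. Given such an $F$, colour all its edges with colour~$3$; since $d_F(u)\le 2\le\lfloor d_G(u)/2\rfloor$, colour~$3$ respects the majority condition everywhere. Writing $G'=G-E(F)$, every vertex satisfies $d_{G'}(u)\ge d_G(u)-2\ge 2$, so $G'$ has no isolated vertices, and a direct check gives $\lceil d_{G'}(u)/2\rceil\le\lfloor d_G(u)/2\rfloor$: for even $d_G(u)$ this is clear, while for odd $d_G(u)$ the bound $d_F(u)\ge 1$ forces $d_{G'}(u)\le d_G(u)-1$, an even number, so the ceiling equals $\lfloor d_G(u)/2\rfloor$. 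Applying Theorem~\ref{theorem-1} to each component of $G'$ colours the remaining edges with $1$ and $2$; whenever part~(i) applies, each colour occurs at most $\lceil d_{G'}(u)/2\rceil\le\lfloor d_G(u)/2\rfloor$ times at $u$, so together with colour~$3$ we obtain a majority $3$-edge-colouring, and the components falling under part~(ii) are treated separately below.

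The main obstacle is the existence of $F$, that is, a subgraph of maximum degree at most $2$---a disjoint union of paths and cycles---meeting every odd-degree vertex of $G$. The natural approach is to treat this as a degree-constrained subgraph problem with lower bound $1$ on the odd-degree vertices and uniform upper bound $2$, and to verify the associated feasibility (Tutte--Berge, or Lov\'{a}sz $(g,f)$-factor) condition. I expect the condition to hold precisely because $\delta(G)\ge 4$ rules out the only genuine obstructions, namely star-like configurations in which a low-capacity neighbourhood cannot absorb the demands of its odd-degree vertices. Concretely, I would argue by taking an $F$ with $\Delta(F)\le 2$ covering as many odd-degree vertices as possible and, assuming some odd-degree vertex $v$ is still uncovered, using its at least five incident edges either to attach $v$ to a neighbour of $F$-degree at most $1$, or to perform an augmenting exchange along a path of $F$; the degree surplus guaranteed by $\delta(G)\ge 4$ is what keeps such an exchange from uncovering a previously covered vertex. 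Turning this into a clean, complete argument is where the real work lies.

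Finally, I must address the exceptional case~(ii) of Theorem~\ref{theorem-1}, which can spoil the above when a component $C$ of $G'$ has all degrees even but an odd number of edges; there one vertex is forced to receive $d_C(u)/2+1$ edges of a single colour. I would handle this by exploiting the freedom in choosing the distinguished vertex $u_G$ in part~(ii). If $C$ contains a vertex $w$ with $d_F(w)=2$, take $u_G=w$: then $d_C(w)=d_G(w)-2$ is even and the forced count equals $d_C(w)/2+1=d_G(w)/2=\lfloor d_G(w)/2\rfloor$, so the majority condition still holds, while colour~$3$ contributes only $2\le\lfloor d_G(w)/2\rfloor$ at $w$. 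If $C$ contains no such vertex, then every vertex of $C$ has $F$-degree at most $1$, so every edge of $C$ has both endpoints unsaturated; moving one such edge from $C$ into colour~$3$ keeps $\Delta(F)\le 2$, creates two odd-degree vertices in $G'$, and thereby places $C$ into case~(i). After these local adjustments every component of $G'$ falls under Theorem~\ref{theorem-1}(i), completing the proof.
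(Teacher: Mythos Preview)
Your overall architecture---set aside one colour class, then feed the remainder into Theorem~\ref{theorem-1}---is exactly the paper's plan, and your treatment of the exceptional case~(ii) mirrors the paper's edge-moving trick. The fatal problem is the existence of your subgraph $F$: a subgraph with $\Delta(F)\le 2$ meeting every odd-degree vertex need \emph{not} exist under $\delta(G)\ge 4$. Take $G=K_{5,11}$. Every vertex has odd degree ($5$ or $11$), so your $F$ would have to cover all $16$ vertices; but any subgraph with maximum degree~$2$ has at most $5\cdot 2=10$ edge-ends on the small side, hence at most $10$ edges, hence at most $10$ edge-ends on the large side, leaving some vertex of the $11$-side uncovered. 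Your augmenting-path sketch cannot be completed here: with all neighbours of an uncovered vertex already at $F$-degree~$2$, every swap you perform on the small side frees one slot but costs another, and the deficit $11>2\cdot 5$ is structural.

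The paper escapes this by \emph{not} insisting on $\Delta(F)\le 2$. It builds a colour-$3$ set $E_1$ with $1\le d_{E_1}(u)\le\lfloor d_G(u)/2\rfloor$ for \emph{every} vertex; the upper bound $\lfloor d_G(u)/2\rfloor$ (which can be large) is all that the majority condition needs, and the lower bound $1$ already forces $\lceil d_{G-E_1}(u)/2\rceil\le\lfloor d_G(u)/2\rfloor$. In $K_{5,11}$ one then lets the five high-degree vertices absorb up to $\lfloor 11/2\rfloor=5$ colour-$3$ edges each, which is ample. Constructing such an $E_1$ is still nontrivial: the paper takes a maximum matching (via the Gallai--Edmonds decomposition) and, for each unmatched vertex, selects one extra incident edge so that no vertex $v$ collects more than $\lfloor d_G(v)/2\rfloor$ selected edges; this selection is certified by a max-flow/min-cut argument that genuinely uses $\delta\ge 4$. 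Your proposal would be repaired by relaxing ``$\Delta(F)\le 2$'' to ``$d_F(u)\le\lfloor d_G(u)/2\rfloor$'', but then the existence of $F$ becomes the whole proof, and something on the order of the paper's Gallai--Edmonds/flow machinery is needed.
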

Since a graph containing a vertex of odd degree at least $3$ 
does not have a majority $2$-edge-coloring,
the number of colors in Theorem \ref{theorem1} is best possible
under the minimum degree condition in that result.
In Section \ref{section2} we prove our results,
and in a conclusion we discuss a variation of majority edge-colorings. 

\section{Proofs}\label{section2}

Theorem \ref{theorem-1} is a consequence of {\it Euler's Theorem} \cite{hi}.

\begin{proof}[Proof of Theorem \ref{theorem-1}]
(i) Let the multigraph $G'$ arise from $G$ 
by adding the edges of a perfect matching $M$ 
on the possibly empty set of vertices of odd degree.
Clearly, the multigraph $G'$ is connected 
and every vertex has even degree in $G'$.
Let $e_0e_1\ldots e_{m-1}$ be an {\it Euler tour} of $G'$,
where, provided that $M$ is not empty, 
we may assume that $e_{m-1}\in M$.
Setting $c(e_i)=(i\,\, {\rm mod}\,\, 2)+1$ 
for every index $i$ such that $e_i$ belongs to $G$,
yields the desired $2$-edge-coloring of $G$.

\medskip

(ii) Let $e_0e_1\ldots e_{m-1}$ be an Euler tour of $G$
such that $e_0$ is incident with $u_G$.
Now, setting $c(e_i)=(i\,\, {\rm mod}\,\, 2)+1$ 
for every index $i$,
yields the desired $2$-edge-coloring of $G$.
\end{proof}
Theorem \ref{theorem0} is a consequence of {\it Vizing's Theorem} \cite{vi}.

\begin{proof}[Proof of Theorem \ref{theorem0}]
Let $G$ be a graph of minimum degree at least $2$.
If $u$ is a vertex of degree $d$, and 
$d=d_1+\cdots+d_k$ is a partition of $d$ into positive integers $d_i$,
then the graph $H$ {\it arises from $G$ by splitting $u$ into vertices of degrees $d_1,\ldots,d_k$} if there is a partition
$N_G(u)=N_1\cup \ldots \cup N_k$ of $N_G(u)$
with $|N_i|=d_i$ for $i\in [k]$,
$V(H)=(V(G)\setminus \{ u\})\cup \{ u_1,\ldots,u_k\}$
for $u_1,\ldots,u_k\not\in V(G)$, and 
$E(H)=E(G-u)\cup\bigcup_{i\in [k]}\{ u_iv:v\in N_i\}$.
See Figure \ref{fig1} for an illustration.

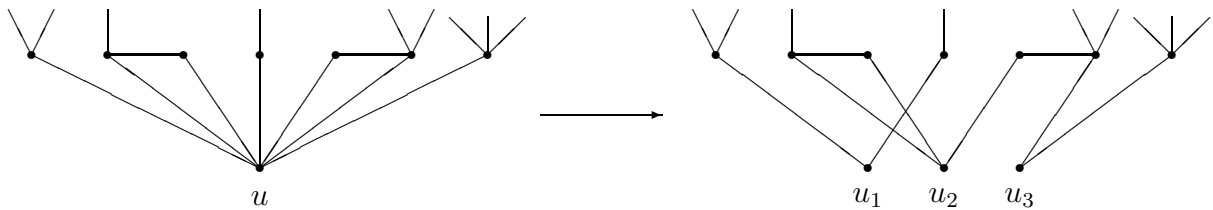
\begin{figure}[H]
\begin{center}
\unitlength 1mm 
\linethickness{0.4pt}
\ifx\plotpoint\undefined\newsavebox{\plotpoint}\fi 
\begin{picture}(158,25)(0,0)
\put(33,4){\circle*{1}}
\put(33,19){\circle*{1}}
\put(123,19){\circle*{1}}
\put(23,19){\circle*{1}}
\put(113,19){\circle*{1}}
\put(13,19){\circle*{1}}
\put(103,19){\circle*{1}}
\put(3,19){\circle*{1}}
\put(93,19){\circle*{1}}
\put(43,19){\circle*{1}}
\put(133,19){\circle*{1}}
\put(53,19){\circle*{1}}
\put(143,19){\circle*{1}}
\put(63,19){\circle*{1}}
\put(153,19){\circle*{1}}
\put(3,19){\line(2,-1){30}}
\put(33,4){\line(-4,3){20}}
\put(23,19){\line(2,-3){10}}
\put(33,4){\line(0,1){15}}
\put(43,19){\line(-2,-3){10}}
\put(33,4){\line(4,3){20}}
\put(63,19){\line(-2,-1){30}}
\put(13,19){\line(1,0){10}}
\put(103,19){\line(1,0){10}}
\put(43,19){\line(1,0){10}}
\put(133,19){\line(1,0){10}}
\put(3,19){\line(-1,2){3}}
\put(93,19){\line(-1,2){3}}
\put(3,19){\line(1,2){3}}
\put(93,19){\line(1,2){3}}
\put(13,19){\line(0,1){6}}
\put(103,19){\line(0,1){6}}
\put(33,19){\line(0,1){6}}
\put(123,19){\line(0,1){6}}
\put(53,19){\line(1,2){3}}
\put(143,19){\line(1,2){3}}
\put(53,19){\line(-1,2){3}}
\put(143,19){\line(-1,2){3}}
\put(63,19){\line(1,1){5}}
\put(153,19){\line(1,1){5}}
\put(63,19){\line(0,1){5}}
\put(153,19){\line(0,1){5}}
\put(63,19){\line(-1,1){5}}
\put(153,19){\line(-1,1){5}}
\put(33,0){\makebox(0,0)[cc]{$u$}}
\put(123,4){\circle*{1}}
\put(133,4){\circle*{1}}
\put(113,4){\circle*{1}}
\put(153,19){\line(-4,-3){20}}
\put(133,4){\line(2,3){10}}
\put(93,19){\line(4,-3){20}}
\put(113,4){\line(2,3){10}}
\put(103,19){\line(4,-3){20}}
\put(123,4){\line(-2,3){10}}
\put(133,19){\line(-2,-3){10}}
\put(113,0){\makebox(0,0)[cc]{$u_1$}}
\put(123,0){\makebox(0,0)[cc]{$u_2$}}
\put(133,0){\makebox(0,0)[cc]{$u_3$}}
\put(70,11){\vector(1,0){16}}
\end{picture}
\end{center}
\caption{Splitting a vertex $u$ of degree $7$ into vertices of degrees $2$, $2$, and $3$.}\label{fig1}
\end{figure}
Now, let $G^*$ arise from $G$ by splitting every vertex of degree $d>3$
into vertices of degrees 
\begin{itemize}
\item $3,\ldots,3$, if $d\equiv 0\,{\rm mod}\,3$,
\item $2,2,3,\ldots,3$, if $d\equiv 1\,{\rm mod}\,3$, and
\item $2,3,\ldots,3$, if $d\equiv 2\,{\rm mod}\,3$.
\end{itemize}
Note that there is a natural bijection between the edges of $G$ and those of $G^*$.
By Vizing's Theorem \cite{vi}, the graph $G^*$ has a proper $4$-edge-coloring,
which yields a majority $4$-edge-coloring of $G$.
In fact, we obtain an edge-coloring of $G$ such that,
for every vertex of degree $d$ at least $4$,
at most $(d+2)/3$ of the incident edges have the same color.
\end{proof}
We proceed to the proof of Theorem \ref{theorem1}.

\begin{proof}[Proof of Theorem \ref{theorem1}]
Let $G$ be a graph of minimum degree $\delta$ at least $4$.
Let $V(G)=D\cup A\cup C$ be the {\it Gallai-Edmonds decomposition} of $G$, 
that is, 
$D$ is the set of all vertices of $G$ that are missed by some maximum matching,
$A$ is the set of all vertices of $G$ outside of $D$ that have a neighbor of $D$,
and $C$ contains the remaining vertices, cf.~\cite{lopl}. 

Let $D'$ be the set of isolated vertices in $G[D]$.

\begin{claim}\label{claim1}
It is possible to select, for every vertex $u$ in $D'$, 
exactly one edge incident with $u$
in such a way that every vertex $v$ in $A$ is incident with at most 
$\left\lfloor\frac{d_G(v)}{2}\right\rfloor$ of the selected edges.
\end{claim}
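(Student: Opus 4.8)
The plan is to reduce Claim \ref{claim1} to a Hall-type (equivalently, max-flow/min-cut) feasibility condition and then verify that condition by a short double-counting argument that uses the hypothesis $\delta\geq 4$ together with the Gallai--Edmonds structure.

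First I would isolate the only structural fact I need: every vertex $u\in D'$ satisfies $N_G(u)\subseteq A$. Indeed, $u$ is isolated in $G[D]$, so it has no neighbor inside $D$; and the Gallai--Edmonds decomposition has no edges between $D$ and $C$; hence all neighbors of $u$ lie in $A$. Since $\delta\geq 4$, each such $u$ has $d_G(u)\geq 4$ edges, all of them going into $A$. I would then phrase the selection as an assignment problem in the bipartite graph $B$ with parts $D'$ and $A$ and the $G$-edges between them: I seek a map $f:D'\to A$ with $f(u)\in N_G(u)$ for every $u$ and $|f^{-1}(v)|\leq\left\lfloor\frac{d_G(v)}{2}\right\rfloor$ for every $v\in A$, and then select, for each $u$, the edge $uf(u)$.

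By the deficiency form of Hall's theorem applied to the network with a unit demand at each $u\in D'$ and capacity $\left\lfloor\frac{d_G(v)}{2}\right\rfloor$ at each $v\in A$ (i.e.\ by max-flow/min-cut), such an $f$ exists if and only if, for every $S\subseteq D'$,
\[
|S|\leq\sum_{v\in N_G(S)}\left\lfloor\frac{d_G(v)}{2}\right\rfloor.
\]
To verify this, fix $S\subseteq D'$ and put $T=N_G(S)\subseteq A$. Every edge incident with a vertex of $S$ ends in $T$, so the number of edges between $S$ and $T$ satisfies $e(S,T)=\sum_{u\in S}d_G(u)\geq 4|S|$; on the other hand $e(S,T)\leq\sum_{v\in T}d_G(v)$. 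Since $\tfrac14 d\leq\left\lfloor\frac{d}{2}\right\rfloor$ for every integer $d\geq 4$ (as $\left\lfloor\frac{d}{2}\right\rfloor\geq\frac{d-1}{2}\geq\frac{d}{4}$ once $d\geq 2$), I obtain $|S|\leq\tfrac14\sum_{v\in T}d_G(v)\leq\sum_{v\in T}\left\lfloor\frac{d_G(v)}{2}\right\rfloor$, which is exactly the required inequality.

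The argument is short, and the main point to get right is the structural observation $N_G(D')\subseteq A$: it is precisely what makes the degree bound $d_G(u)\geq 4$ usable, since it guarantees that every edge leaving $S$ lands in $T$ and hence the edge count is clean. The only delicate spot is the elementary inequality $\tfrac14 d\leq\left\lfloor\frac{d}{2}\right\rfloor$, which requires $d\geq 4$ and is exactly where the hypothesis $\delta\geq 4$ is consumed; a weaker minimum-degree assumption would break this step.
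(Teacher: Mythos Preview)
Your proof is correct and follows essentially the same approach as the paper: both reduce the selection to a bipartite flow/Hall feasibility question on the edges between $D'$ and $A$, and then verify the required inequality by double-counting these edges together with the elementary bound $\delta\lfloor d/2\rfloor\geq d$ for $d\geq\delta$. The paper carries out an explicit min-cut analysis of the same network (partitioning the reachable $D'$-vertices into $D_0,D_1,D_2$), whereas you invoke the Hall-type criterion as a black box and check it for an arbitrary $S\subseteq D'$; this is a slightly cleaner packaging of the identical idea.
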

\begin{proof}[Proof of Claim \ref{claim1}]
Let the network $N$ arise from the bipartite subgraph of $G$
with partite sets $D'$ and $A$ that contains all edges of $G$
incident with the vertices in $D'$ by 
\begin{itemize}
\item adding a source vertex $s$, and connecting $s$ to each vertex in $D'$ by an arc of capacity $1$,
\item adding a sink vertex $t$, and connecting each vertex $v$ in $A$ to $t$ by an arc of capacity $\left\lfloor\frac{d_G(v)}{2}\right\rfloor$,
\item orienting all edges of $G$ between $D'$ and $A$ towards $A$, and assigning capacity $1$ to these arcs.
\end{itemize}
By the {\it Max-Flow-Min-Cut Theorem} \cite{ff} and the {\it Integral Flow Theorem} \cite{df}, 
the claimed statement is equivalent to the statement that a maximum flow in $N$ has value $|D'|$.
Hence, in order to complete the proof, we suppose, for a contradiction,
that $f$ is an integral maximum flow in $N$ of value less than $|D'|$.
Let $X$ be the set of all vertices of $N$ 
that are reachable from $s$ on a directed $f$-augmenting path,
that is, the set $X$ defines an $s$-$t$-cut of minimum capacity,
for all arcs leaving $X$, the flow value is the capacity, and
for all arcs entering $X$, the flow value is $0$.

Let $X_D=X\cap D'$ and $X_A=X\cap A$.

Let $D_0\cup D_1\cup D_2$ be a partition of $X_D$ such that
\begin{itemize}
\item $D_0$ contains the vertices $u$ from $X_D$ with $f((s,u))=0$,
\item $D_1$ contains the vertices $u$ from $X_D$ with $f((s,u))=1$ 
and $f((u,v))=1$ for some vertex $v$ in $X_A$,
\item $D_2$ contains the vertices $u$ from $X_D$ with $f((s,u))=1$
and $f((u,v))=0$ for every vertex $v$ in $X_A$.
\end{itemize}
Since the value of $f$ is less than $|D'|$, we have $D_0\not=\emptyset$.
By the definition and properties of $X$, 
no vertex in $D_0\cup D_1$ has a neighbor in $A\setminus X_A$,
and $f((v,t))=\left\lfloor\frac{d_G(v)}{2}\right\rfloor$
for every vertex $v$ in $X_A$,
which implies 
\begin{eqnarray}\label{e1}
|D_1|&=&\sum\limits_{v\in X_A}\left\lfloor\frac{d_G(v)}{2}\right\rfloor.
\end{eqnarray}
For every vertex $u$ in $D_2$, 
the unique vertex $v$ from $A\setminus X_A$
with $f((u,v))=1$ is the only neighbor of $u$ in $A\setminus X_A$.
Therefore, the number $m$ of edges of $G$ between $X_D$ and $X_A$ is
at least $\delta(|D_0|+|D_1|)+(\delta-1)|D_2|$, and 
at most $\sum\limits_{v\in X_A}d_G(v)$, which implies
\begin{eqnarray}\label{e2}
\sum\limits_{v\in X_A}d_G(v) 
& \geq & \delta(|D_0|+|D_1|)+(\delta-1)|D_2|
\stackrel{(\ref{e1})}{\geq} \underbrace{\delta |D_0|}_{>0}
+\sum\limits_{v\in X_A}\delta\left\lfloor\frac{d_G(v)}{2}\right\rfloor
> \sum\limits_{v\in X_A}\delta\left\lfloor\frac{d_G(v)}{2}\right\rfloor.
\end{eqnarray}
For integers $\delta$ and $d$ with $3\leq \delta\leq d$, 
it is easy to verify that $\delta\left\lfloor\frac{d}{2}\right\rfloor\geq d$, 
which yields a contradiction to (\ref{e2}).
This completes the proof of Claim \ref{claim1}.
\end{proof}
The properties of the {\it Gallai-Edmonds decomposition} imply
that $G[C]$ has a perfect matching $M_C$,
that there is a matching $M_A$ using edges between $A$ and $D$
that connects each vertex from $A$ to a distinct component of $G[D]$, and
that every component of $G[D]$ is {\it factor-critical}.

We now construct a subset $E_1$ of the edge set $E(G)$ of $G$
as follows starting with the empty set:
\begin{itemize}
\item We add to $E_1$ all $|D'|$ selected edges as in Claim \ref{claim1}.
\item We add $M_C$ to $E_1$.
\item For every vertex $v$ from $A$ that is not incident with a selected edge,
we add to $E_1$ the unique edge from $M_A$ incident with $v$.
Let $M_A'$ be the subset of $M_A$ added to $E_1$.
\item For every component $K$ of $G[D]$ of order at least $3$ 
such that some vertex $x$ of $K$ is incident with an edge from $M_A'$,
we add to $E_1$ a perfect matching of $K-x$.
\item For every component $K$ of $G[D]$ of order at least $3$ 
such that no vertex of $K$ is incident with an edge from $M_A'$,
we add to $E_1$ a perfect matching of $K-x$ for some vertex $x$ of $K$
as well as one further edge of $K$ incident with $x$.
\end{itemize}
Up to some small modifications explained below, this completes the description of $E_1$.

By construction, 
the spanning subgraph $G_1$ of $G$ with edge set $E_1$ satisfies 
\begin{eqnarray}\label{e3}
1\leq d_{G_1}(u)\leq \left\lfloor\frac{d_G(u)}{2}\right\rfloor
\mbox{ for every vertex $u$ of $G$.}
\end{eqnarray}
Let $G_2$ be the spanning subgraph of $G$ with edge set $E(G)\setminus E_1$.

For every component $K$ of $G_2$ such that 
all vertices of $K$ have even degree in $G_2$, 
$K$ has an odd number of edges, and
all vertices from $V(K)$ have degree $1$ in $G_1$,
we select any edge $e_K$ from $K$ and move it from $G_2$ to $G_1$.
Note that $K-e_K$ contains exactly two vertices of odd degree, and, hence, is still connected.
Furthermore, since $G$ has minimum degree at least $4$, 
it follows that (\ref{e3}) still holds after each such modification.
Having performed these modifications for each such component of $G_2$,
every component $K$ of (the modified) $G_2$ now
\begin{itemize}
\item either contains at least one vertex of odd degree in $K$,
\item or all vertices of $K$ have even degrees in $K$, 
and the number of edges of $K$ is even,
\item or all vertices of $K$ have even degrees in $K$, 
the number of edges of $K$ is odd, and
$K$ contains a vertex $u_K$ such that the degree of $u_K$ in $G_1$ is at least $2$.
\end{itemize}
The components of $G_2$ as in the final point are called {\it type 2} components,
and the remaining components of $G_2$ are called {\it type 1} components.

We are now in a position to describe a majority $3$-edge-coloring $c:E(G)\to [3]$.
\begin{itemize}
\item For all edges $e$ of $G_1$, let $c(e)=3$.
\item For every component $K$ of $G_2$ that is of type $1$,
let $c:E(K)\to [2]$ be as in Theorem \ref{theorem-1}(i) (applied to $K$ as $G$).
\item For every component $K$ of $G_2$ that is of type $2$,
let $c:E(K)\to [2]$ be as in Theorem \ref{theorem-1}(ii) 
(applied to $K$ and $u_K$ as $G$ and $u_G$).
\end{itemize}
It is now easy to verify that $c$ is a majority $3$-edge-coloring of $G$,
which completes the proof.
\end{proof}

\section{Conclusion}

The most natural question motivated by our results is 
which graphs of minimum degree at least $2$ 
do not have a majority $3$-edge-coloring. 

As a variation of majority edge-colorings, 
we propose to study {\it $\alpha$-majority edge-colorings} for $\alpha\in (0,1)$,
where at most an $\alpha$-fraction of the edges 
incident with each vertex 
are allowed to have the same color.
If $k$ is a positive integer at least $2$, 
then every positive integer at least $k(k-1)$ 
can be written as a non-negative integral linear combination of $k$ and $k+1$.
Using this fact, a straightforward adaptation of the proof of Theorem \ref{theorem0}
yields the following statement: 
{\it If a graph $G$ has minimum degree at least $k(k-1)$,
then $G$ has a $\frac{1}{k}$-majority $(k+2)$-edge-coloring.}
A probabilistic argument implies that, for a sufficiently large minimum degree,
one color less suffices.

\begin{theorem}\label{theorem3}
For every integer $k$ at least $2$,
there is a positive integer $\delta_k$ such that 
every graph of minimum degree at least $\delta_k$
has a $\frac{1}{k}$-majority $(k+1)$-edge-coloring.
\end{theorem}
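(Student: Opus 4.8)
The plan is to use the probabilistic method, specifically the general (asymmetric) form of the Lov\'asz Local Lemma. I would color each edge of $G$ independently and uniformly at random with one of the $k+1$ colors. For a vertex $u$ of degree $d_u:=d_G(u)$ and a color $\alpha\in[k+1]$, let $X_{u,\alpha}$ be the number of edges incident with $u$ that receive color $\alpha$; since the edges at $u$ are colored independently, $X_{u,\alpha}$ is a sum of $d_u$ independent indicator variables with mean $\mu_u=d_u/(k+1)$. The coloring is a $\frac{1}{k}$-majority coloring precisely if $X_{u,\alpha}\le d_u/k$ for all $u$ and $\alpha$, so I would define the bad event $B_{u,\alpha}=\{X_{u,\alpha}>d_u/k\}$ and aim to show that with positive probability none of these events occurs.

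First I would control a single bad event. Since $d_u/k=(1+\tfrac{1}{k})\mu_u$, a standard Chernoff bound yields $P(B_{u,\alpha})\le\exp(-c_k d_u)$ for a constant $c_k>0$ depending only on $k$ (one may take $c_k=\frac{1}{3k^2(k+1)}$). The crucial feature is that this bound decays \emph{exponentially} in the local degree $d_u$, while the gap $d_u/k-\mu_u=\frac{d_u}{k(k+1)}$ between threshold and mean is a fixed positive fraction of $d_u$. Next I would describe the dependency structure: the event $B_{u,\alpha}$ is determined by the colors of the $d_u$ edges incident with $u$, so it is mutually independent of all events $B_{v,\beta}$ with $v\notin\{u\}\cup N_G(u)$, and hence its neighbourhood in the dependency graph consists of at most $k+d_u(k+1)$ events.

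To apply the general Local Lemma I would assign every event the \emph{same} value $x_{u,\alpha}=x_0$ with $x_0=\frac{c_k}{4(k+1)}$, and verify $P(B_{u,\alpha})\le x_0(1-x_0)^{k+d_u(k+1)}$. Taking logarithms and using $-\ln(1-x_0)\le 2x_0$, this reduces to the inequality
\begin{equation*}
2k\,x_0+\ln\frac{1}{x_0}\le\frac{c_k}{2}\,d_u,
\end{equation*}
whose left-hand side is a constant. Thus the condition holds for every vertex of degree at least
\begin{equation*}
\delta_k:=\frac{2}{c_k}\left(2k\,x_0+\ln\frac{1}{x_0}\right),
\end{equation*}
a finite value depending only on $k$. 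By the general Lov\'asz Local Lemma, with positive probability no bad event occurs, and any such outcome is the desired $\frac{1}{k}$-majority $(k+1)$-edge-coloring.

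The main obstacle is that the maximum degree of $G$ is unbounded, so the dependency neighbourhood of $B_{u,\alpha}$ cannot be bounded by a constant, and the symmetric Local Lemma is unavailable. This is exactly why the exponential concentration bound is essential: for a high-degree vertex the probability $\exp(-c_k d_u)$ shrinks fast enough to absorb the linear growth $d_u(k+1)$ of its dependency degree, which is what lets the single uniform choice $x_0$ satisfy the general Local Lemma simultaneously at vertices of all sufficiently large degrees. A minor technical point to handle along the way is that the threshold $d_u/k$ need not be an integer, but since the bad event entails $X_{u,\alpha}\ge\lfloor d_u/k\rfloor+1>d_u/k$, the Chernoff estimate above applies verbatim.
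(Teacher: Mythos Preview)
Your proposal is correct and follows essentially the same approach as the paper: a uniformly random $(k+1)$-edge-coloring, a Chernoff bound giving $\exp(-c_k d_u)$ for each bad event, and the Lov\'asz Local Lemma to handle the unbounded-degree dependency structure. The only cosmetic difference is the LLL variant used: the paper applies the \emph{weighted} Local Lemma with one bad event per vertex and weight $t_u=\lfloor d_u/\delta\rfloor$, while you apply the general (asymmetric) form with one event per vertex--color pair and a single uniform value $x_0$; both packagings exploit the same key balance between the exponential decay $\exp(-c_k d_u)$ of the bad-event probability and the linear growth $\Theta(d_u)$ of the dependency degree.
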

\begin{proof}
Let $G$ be a graph of minimum degree $\delta$ at least $\delta_k$,
where we specify $\delta_k$ later. 
Let $c:E(G)\to [k+1]$ be a random $(k+1)$-edge-coloring,
where we choose the color of each edge uniformly and independently at random.
For every vertex $u$ of $G$, we consider the {\it bad event} $A_u$
that more than $\frac{1}{k}d_G(u)$ of the edges incident with $u$ have the same color.
For $d=d_G(u)$, the {\it union bound} and the {\it Chernoff inequality}, cf.~\cite{more}, imply 
\begin{eqnarray*}
\mathbb{P}\left[A_u\right] 
& \leq & (k+1)\mathbb{P}\left[{\rm Bin}\left(d,\frac{1}{k+1}\right)>\frac{d}{k}\right]\hspace{3cm}\mbox{(union bound)}\\
&=& (k+1)\mathbb{P}\left[{\rm Bin}\left(d,\frac{1}{k+1}\right)>
\left(1+\frac{1}{k}\right)\frac{d}{k+1}\right]\\
&\leq &(k+1)e^{-\frac{d}{3k^2(k+1)}}.\hspace{5.7cm}\mbox{(Chernoff inequality)}
\end{eqnarray*}\\[-16mm]

In order to complete the proof, 
we use the {\it weighted Lov\'{a}sz Local Lemma}, cf.~\cite{more},
to show that with positive probability none of the bad events $A_u$ occurs.
Let $p=(k+1)e^{-\frac{\delta}{3k^2(k+1)}}$
and, for every vertex $u$ of $G$, let $t_u=\left\lfloor\frac{d_G(u)}{\delta}\right\rfloor$.
Note that $d_G(u)\geq \delta$ implies that $t_u$ is a positive integer,
and that $2t_u=2\left\lfloor\frac{d_G(u)}{\delta}\right\rfloor\geq \frac{d_G(u)}{\delta}$.
Choosing $\delta_\alpha$ sufficiently large, 
we may assume that $p\leq \frac{1}{4}$, and, hence,
$\mathbb{P}\left[A_u\right]\leq p^{\frac{d_G(u)}{\delta}}\leq p^{t_u}$.
Since, for every vertex $u$ of $G$, 
the event $A_u$ is determined only by the colors of the edges incident with $u$,
which are chosen uniformly and independently at random,
the event $A_u$ is mutually independent of all events $A_v$
except for those for which $v$ is a neighbor of $u$.
We obtain
\begin{eqnarray*}
\sum\limits_{v\in N_G(u)}(2p)^{t_v}
\leq 2 p d_G(u)
\leq 4p\delta t_u
=\underbrace{\left(4(k+1)e^{-\frac{\delta}{3k^2(k+1)}}\delta\right)}_{\to 0\,\,for\,\,\delta\to\infty}t_u,
\end{eqnarray*}\\[-6mm]
which is at most $t_u/2$ for $\delta_k$ sufficiently large.
Now, by the weighted Lov\'{a}sz Local Lemma,
the edge-coloring $c$ is a $\frac{1}{k}$-majority $(k+1)$-edge-coloring
with positive probability, which completes the proof.
\end{proof}
Our Theorem \ref{theorem1} implies that $4$ 
is the smallest possible value for $\delta_2$.

\pagebreak

\noindent {\bf Acknowledgement.}
The research reported in this paper was carried out at the 
{\it 25th C5 Graph Theory Workshop} 
organized by Ingo Schiermeyer in Rathen, May 2022.
After a pause due to the COVID-19 pandemic,
we very strongly felt the value of the creative, collaborative, and enjoyable atmosphere 
and the direct personal exchange at that workshop.
We express our gratitude to Ingo, 
not just for this year but also for the long tradition of this wonderful meeting.

\vspace{-5mm}

\end{document}